\newtheorem{thm}{Theorem}[section]
\newtheorem{corollary}[thm]{Corollary}
\theoremstyle{definition}
\newtheorem{definition}[thm]{Definition}
\newtheorem{example}[thm]{Example}
\newtheorem{remark}[thm]{Remark}
\def\B{\mathbb{B}}
\def\G{\mathbb{G}}
\def\F{\mathbb{F}}
\def\R{\mathbb{R}}
\def\C{\mathbb{C}}
\def\P{\mathbb{P}}
\def\N{\mathbb{N}}
\def\calG{\mathcal{G}}
\def\calM{\mathcal{M}}
\def\M{\mathbb{M}}
\newcommand{\abs}[1]{\left\lvert#1\right\rvert}
\newcommand{\Aut}{\operatorname{Aut}}
\newcommand{\Span}{\operatorname{Span}}
\title{Equivariant log-concavity of graph matchings}
\author[S. Li]{Shiyue Li}\address{Department of Mathematics, Brown University, Providence, RI 02906}
\email{\url{shiyue_li@brown.edu}}
\date{\today}
\begin{document}
\maketitle
\begin{abstract}
    For any graph, we show that the graded permutation representation of the graph automorphism group given by matchings is strongly equivariantly log-concave. The proof gives a family of equivariant injections inspired by a combinatorial map of Kratthenthaler and reduces to the hard Lefschetz theorem. 
\end{abstract}

\section{Introduction}
Let $\G$ be a graph. A \textit{matching} on $\G$ is a set of pairwise non-adjacent edges. Let $\M_k$ be the set of matchings on $\G$ with $k$ edges. It is well-known that the sequence of graph matching numbers is \textit{log-concave}. That is, for positive integers ${k \le \ell}$, 
\[\abs{\M_{k-1}} \abs{\M_{\ell + 1}} \le \abs{\M_{k}} \abs{\M_{\ell}},\] The first proofs in \cite[Theorem 4.2]{Heilman1972Dimer} and \cite{Plummer1986matching} show that the generating polynomial of the graph matching sequence $\sum \abs{\M_k} x^k$ has only real roots, which implies log-concavity. Later, in \cite{Krattenthaler}, Kratthenthaler gave a combinatorial proof of this result. However, as we shall see in \cref{ex:six-cycle}, this   combinatorial approach breaks the graph symmetry. 

It is interesting to ask whether the graph matching sequence exhibits log-concavity that respects the graph symmetry. The notion of \textit{(strongly) equivariant log-concavity} arises when one considers log-concavity with respect to the symmetry of the underlying objects. It was introduced by Gedeon, Proudfoot and Young \cite[Section 5]{gedeon2017equivariant} in the context of the equivariant Kazhdan-Lusztig polynomial of a matroid. It is recently used to study other log-concave sequences in combinatorics, geometry and topology that involve group actions (see \cite{Proudfoot2018TheZO, matherne2021equivariant, gui2022equivariant, gui2022equivariantkahler}). 

We now recall precise definitions.   
\begin{definition}[{\cite[Section 5]{gedeon2017equivariant}}]
    Let $H$ be a group. 
    \begin{enumerate}
        \item A graded $H$-representation
        \[
        V^{\bullet} = \bigoplus_{k = 0} V_k
        \] is \textbf{$H$-equivariantly log-concave} if for all $i \ge 1$, there exists an $H$-equivariant injection 
        \[
        V_{i-1} \otimes V_{i + 1} \hookrightarrow V_i \otimes V_i. 
        \]
        \item A graded $H$-representation
        \[
        V^{\bullet} = \bigoplus_{k = 0} V_k
        \] is \textbf{strongly $H$-equivariantly log-concave} if for all positive integers $k \le \ell$, there exists an $H$-equivariant injection 
        \[
        V_{k-1} \otimes V_{\ell+1} \hookrightarrow V_{k} \otimes V_{\ell}. 
        \]
    \end{enumerate}
\end{definition}
Here, by an $H$-equivariant map, we mean the following: suppose two objects $X$ and $Y$ each afford an action by a group $H$. If a map $\varphi \colon X \to Y$ satisfies that 
\[
h \cdot \varphi(x) = \varphi(h \cdot x) 
\] for all $h$ in $H$ and $x$ in $X$, then $\varphi$ is $H$-equivariant.

In the case of graph matchings, the graded $\Aut(\G)$-representation is the graded permutation representation of $\Aut(\G)$ over any field $\F$ of characteristic $0$ \[V^{\bullet}_{\G} := \bigoplus_k \Span_{\F}\{ x_M \mid M \in \M_k\}.\] The aim of this paper is to prove the following theorem. 
\begin{thm}
\label{thm:main}
    The graded permutation representation $V_{\G}^{\bullet}$ of $\Aut(\G)$ given by matchings on $\G$  is strongly $\Aut(\G)$-equivariantly log-concave. 
\end{thm}
To prove this, we first construct an $\Aut(\G)$-equivariant map from $V_{k-1} \otimes V_{\ell + 1}$ to $V_{k} \otimes V_{\ell}$, and then decompose the tensor products $V_{k-1} \otimes V_{\ell+1}$ and $V_{k} \otimes V_{\ell}$ into direct sums with the following properties:
\begin{enumerate} 
    \item The direct sum decomposition of $V_{k-1} \otimes V_{\ell-1}$ is given by a partition on $\M_{k-1} \times \M_{\ell + 1}$. 
    \item The direct sum decomposition of $V_{k} \otimes V_{\ell}$ is given by a similar partition on $\M_{k} \times \M_{\ell}$. 
    \item On each direct summand $V(P) \subset V_{k-1} \otimes V_{\ell+1}$ associated with a part $P$, the map $\Phi_{k, \ell}$ is a hard Lefschetz operator into the direct summand $V(P') \subset V_{k} \otimes V_{\ell}$ determined by $P$. Injectivity follows from the hard Lefschetz theorem.
\end{enumerate} 
Note that taking dimensions of $V_{\G}^{\bullet}$ immediately recovers the non-equivariant log-concavity of graph matching numbers. 

As a corollary, if $\G$ is the graph of $n$ disjoint edges, then we obtain an $S_n$-equivariant categorification of the log-concavity of the binomial coefficients. 
\begin{corollary}
\label{cor}
    Given an integer $n \ge 1$, the graded permutation representation of $S_n$ given by 
    all $k$-subsets of $[n]$
    \[
    V^{\bullet} = \bigoplus_{k} \Span_{\F}\{x_{S} \mid S \subseteq [n], \abs{S} = k\}
    \] is $S_n$-equivariantly log-concave. 
\end{corollary}

\begin{remark}
    We situate both the object $V_{\G}^{\bullet}$ and the techniques in the present work in a slightly broader context. The assignment to each graph $\G$ of a graded $\Aut(\G)$-representation $V_{\G}^{\bullet}$ is also a representation of the \textit{graph minor category} $\calG$ (see \cite{miyata2020categorical}). It consists of graphs as objects, and compositions of edge deletions, contractions and automorphisms as morphisms. For each graph $\G$, one can also take the simplicial complex of matchings $\calM(\G)$ on $\G$ and study its homology $H(\calM(\G))$ (see \cite{miyata2020graph}). One connection is that, the differentials that define the homology $H(\calM(\G))$ also equip $V_{\G}^{\bullet}$ with a natural family of differentials that respects the graph minor morphisms. It would be very interesting to see further interactions between $V_{\G}^{\bullet}$ and other representations of the category $\calG$. 
\end{remark}

\section*{Acknowledgement} The author is grateful to Melody Chan for suggesting this interesting problem, and to Nick Proudfoot for many useful conversations. Thanks also go to two anonymous referees and Weiyan Chen, whose suggestions improved the paper and corrected small errors in an early draft. The author would also like to acknowledge support by the Coline M. Makepeace Fellowship from Brown University and the NSF grant DMS-1844768.

\section{Proof of \cref{thm:main}}
Let $k \le \ell$ be positive integers less than the maximum matching size on $\mathbb{G}$, and let $\M_{k, \ell} = \M_{k} \times \M_{\ell}$. 

\subsection{Construction}
\label{subsec:construction} Our strategy is to assign a subset of $\M_{k, \ell}$ to each pair $(M, M')$ in $\M_{k-1, \ell+1}$. The procedure goes as follows: 
\begin{enumerate}
    \item[(a)] Color $M$ with blue and $M'$ with pink, then each edge has $0, 1$, or $2$ colors. If we consider only the set of edges with exactly $1$ color, then we obtain a subgraph $\G_{M, M'}$. That is, $\G_{M, M'}$ is the symmetric difference of $M$ and $M'$. 
    \item[(b)] Since $M, M'$ are matchings, each vertex in $\G_{M, M'}$ cannot have degree $> 2$. That is, any connected component in $\G_{M, M'}$ is one of the two types:   
    \begin{enumerate}
        \item[(i)] a chain; or 
        \item[(ii)] a cycle with even number of edges. 
    \end{enumerate}
    Let $\C_{M, M'}$ denote the set of chains in $\G_{M, M'}$ with odd length. 
    By definition, any chain in $\C_{M, M'}$ must start and end with the same color. If a chain starts and ends with blue (respectively, pink) edges, we call it a blue (respectively, pink) chain. 
    We write 
    \begin{align*}
    \B_{M, M'} &:= \{\textrm{blue chains in } \C_{M, M'}\}, \\ 
    \P_{M, M'} &:= \{\textrm{pink chains in } \C_{M, M'}\}.
    \end{align*}  
    
    \item[(c)] Choose any pink chain in $\P_{M, M'}$. If we swap the edge colors (blue to pink, pink to blue), then we obtain an additional blue chain, and the number of blue (respectively, pink) edges in $\G_{M, M'}$ increases (respectively, decreases) by $1$. Therefore, if we record all the blue (respectively, pink) edges in this new edge-colored graph, we obtain a pair $(N, N')$ in $\M_{k, \ell}$. 
    \item[(d)] Do (c) for all pink chains in $\P_{M, M'}$. By construction, the resulting pairs are all distinct. Therefore, we obtain a subset of $\M_{k, \ell}$ and denote it as $\N_{M, M'}$. 
\end{enumerate}
We highlight some numerical facts. 
Before (c), the contributions of blue and pink edges in $\G_{M, M'}$ from every type of components are as follows.
    \begin{enumerate}
        \item[(i)] An even chain or cycle contributes the same number of blue and pink edges. 
        \item[(ii)] A blue chain contributes $1$ more blue edge than pink edges.
        \item[(iii)] A pink chain contributes $1$ more pink edge than blue edges.
    \end{enumerate}
    If an edge is not in $\G_{M, M'}$, then it must have no color or be both blue and pink. 
    Therefore we have that 
    \[
    \label{eq:r-minus-b}
    \abs{\P_{M, M'}} - \abs{\B_{M, M'}} = (\ell + 1) - (k - 1) = k - \ell + 2 \ge 2. 
    \]
    The positivity of this difference ensures that there exists a pink chain on which to perform (c).  
    This, together with the fact that $\abs{\B_{M, M'}} + \abs{\P_{M, M'}} = \abs{\C_{M, M'}}$, implies that 
    \[
    \abs{\B_{M, M'}} \le \frac{\abs{\C_{M, M'}}}{2} - 1.
    \]
After (d), we have $\abs{\N_{M, M'}} = \abs{\P_{M, M'}} \ge 2 + \abs{\B_{M, M'}} \ge 2$ by construction.

\begin{example}
Let $(M, M')$ a pair in $\M_{1, 3}$ in the $6$-cycle graph $\G$ as shown. The construction above maps the pair $(M, M')$ to the set $\N_{M, M'} = \{(N_1, N_1'), (N_2, N_2')\}$ in $\M_{2, 2}$ (\cref{fig:new-matchings}). 
\begin{figure}[h!]
    \centering
    \definecolor{pink}{rgb}{1, 0.4, 0.7}
\definecolor{blue}{rgb}{0.2, 0.8, 1}

\begin{tikzpicture}[style=ultra thick]
\newdimen\R
\R=0.8cm
\draw  (0:\R)  -- (60:\R);
\draw (60:\R)  -- (120:\R);
\draw (120:\R)  -- (180:\R);
\draw (180:\R)  -- (240:\R);
\draw [blue] (240:\R)  -- (300:\R);
\draw (300:\R)  -- (360:\R);
\draw (360:\R)  -- (0:\R);

\foreach \x in {60,120,180,240,300,360}
\node[inner sep=1.2pt,circle,black,draw,fill] at (\x:\R) {};

\node at (0, -1.2) {$M$};
\node at (2, -1.2) {$M'$};

\begin{scope}[style=ultra thick,shift={(2,0)}]
\newdimen\R
\R=0.8cm
\R=0.8cm
\draw (0:\R)  -- (60:\R);
\draw [pink, dotted] (60:\R)  -- (120:\R);
\draw (120:\R)  -- (180:\R);
\draw [pink, dotted] (180:\R)  -- (240:\R);
\draw (240:\R)  -- (300:\R);
\draw [pink, dotted] (300:\R)  -- (360:\R);
\draw (360:\R)  -- (0:\R);
\foreach \x in {60,120,180,240,300,360}
\node[inner sep=1.2pt,circle,black,draw,fill] at (\x:\R) {};
\end{scope}

\begin{scope}[style=ultra thick]
\draw [ultra thick,->] (3.5,0) --(4.5,0);
\end{scope}
\begin{scope}[style=ultra thick, shift={(6,1)}]
\newdimen\R
\R=0.8cm
\draw (0:\R)  -- (60:\R);
\draw [blue] (60:\R)  -- (120:\R);
\draw (120:\R)  -- (180:\R);
\draw (180:\R)  -- (240:\R);
\draw [blue] (240:\R)  -- (300:\R);
\draw (300:\R)  -- (360:\R);
\draw (360:\R)  -- (0:\R);

\foreach \x in {60,120,180,240,300,360}
\node[inner sep=1.2pt,circle,black,draw,fill] at (\x:\R) {};

\node at (0, -1.2) {$N_1$};
\node at (2, -1.2) {$N_1'$};
\end{scope}
\begin{scope}[style=ultra thick,shift={(8,1)}]
\newdimen\R
\R=0.8cm
\draw (0:\R)  -- (60:\R);
\draw (60:\R)  -- (120:\R);
\draw (120:\R)  -- (180:\R);
\draw [pink, dotted] (180:\R)  -- (240:\R);
\draw (240:\R)  -- (300:\R);
\draw [pink, dotted] (300:\R)  -- (360:\R);
\draw (360:\R)  -- (0:\R);
\foreach \x in {60,120,180,240,300,360}
\node[inner sep=1.2pt,circle,black,draw,fill] at (\x:\R) {};
\end{scope}

\begin{scope}[style=ultra thick, shift={(6,-1.5)}]
\newdimen\R
\R=0.8cm
\draw (0:\R)  -- (60:\R);
\draw (60:\R)  -- (120:\R);
\draw (120:\R)  -- (180:\R);
\draw [blue] (180:\R)  -- (240:\R);
\draw (240:\R)  -- (300:\R);
\draw [blue] (300:\R)  -- (360:\R);
\draw (360:\R)  -- (0:\R);

\foreach \x in {60,120,180,240,300,360}
\node[inner sep=1.2pt,circle,black,draw,fill] at (\x:\R) {};

\node at (0, -1.3) {$N_2$};
\node at (2, -1.3) {$N_2'$};
\end{scope}
\begin{scope}[style=ultra thick,shift={(8,-1.5)}]
\newdimen\R
\R=0.8cm
\draw (0:\R)  -- (60:\R);
\draw [pink, dotted] (60:\R)  -- (120:\R);
\draw (120:\R)  -- (180:\R);
\draw (180:\R)  -- (240:\R);
\draw [pink, dotted]  (240:\R)  -- (300:\R);
\draw (300:\R)  -- (360:\R);
\draw (360:\R)  -- (0:\R);
\foreach \x in {60,120,180,240,300,360}
\node[inner sep=1.2pt,circle,black,draw,fill] at (\x:\R) {};
\end{scope}

\end{tikzpicture}
    \caption{}
    \label{fig:new-matchings}
\end{figure}
\end{example}

Now we define the linear map $\Phi_{k, \ell} \colon V_{k - 1} \otimes V_{\ell + 1} \to V_{k} \otimes V_{\ell}$ by 
\[
    x_{M} \otimes x_{M'} \mapsto \frac{1}{\abs{\N_{M, M'}}} \sum_{(N, N')} x_{N} \otimes x_{N'}, 
\] where $(N, N')$ ranges over all pairs in $\N_{M, M'}$ and extend linearly.

\subsection{Proof of equivariance}

This section is devoted to show that $\Phi_{k, \ell}$ is $\Aut(\G)$-equivariant. That is, for any $\sigma$ in $\Aut(\G)$ and $(M, M')$ in $\M_{k-1, \ell+1}$, 
\[
    \Phi_{k, \ell}( \sigma \cdot (x_{M} \otimes x_{M'})) = \sigma \cdot \Phi_{k, \ell}(x_M \otimes x_{M'}).
\]

The left hand side is 
\[
\Phi_{k, \ell}( \sigma \cdot (x_{M} \otimes x_{M'})) = \Phi_{k, \ell}( x_{\sigma(M)} \otimes  x_{\sigma(M')}) 
        = \frac{1}{\abs{\N_{\sigma(M), \sigma(M')}}} \sum_{(N, N')} x_{N} \otimes x_{N'},
\] where $(N, N')$ ranges over all pairs in $\N_{\sigma(M), \sigma(M')}$. 

The right hand side is 
\[
\sigma \cdot \Phi_{k, \ell}(x_M \otimes x_{M'}) = \sigma \cdot \bigg(\frac{1}{\abs{\N_{M, M'}}} \sum_{(N, N')} x_N \otimes x_{N'} \bigg)
        = \frac{1}{\abs{\N_{M, M'}}} \sum_{(N, N')}  x_{\sigma(N)} \otimes x_{\sigma(N')}, 
\]
where $(N, N')$ ranges over all pairs in $\N_{M, M'}$. 

Crucially, producing the set $\N_{M, M'}$ and performing $\sigma$ on the pair $(M, M')$ are commutative. In other words, the set equality 
\[
    \N_{\sigma(M), \sigma(M')} = \sigma(\N_{M, M'}) = \{(\sigma(N), \sigma(N')) \colon (N, N') \in \N_{M, M'}\}
\] holds. Moreover, by virtue of $\sigma$ being a graph automorphism, the numerical equality
\[
\frac{1}{\abs{\N_{M, M'}}} = \frac{1}{\abs{\N_{\sigma(M), \sigma(M')}}}
\] holds. 
These two identities imply that the two sides in the desired equation coincide. 

\subsection{Proof of injectivity} 
\label{subsec:injectivity}
This section is devoted to show that $\Phi_{k, \ell}$ is injective. 
As stated in the introduction, we do so by decomposing the tensor products $V_{k-1} \otimes V_{\ell + 1}$ and $V_{k} \otimes V_{\ell}$ into direct sums with the following properties:
\begin{enumerate} 
    \item The direct sum decomposition of $V_{k-1} \otimes V_{\ell-1}$ is given by a partition on $\M_{k-1} \times \M_{\ell + 1}$. 
    \item The direct sum decomposition of $V_{k} \otimes V_{\ell}$ is given by a similar partition on $\M_{k} \times \M_{\ell}$. 
    \item On each direct summand $V(P) \subset V_{k-1} \otimes V_{\ell+1}$ associated with a part $P$, the $\Phi_{k, \ell}$ is a hard Lefschetz operator into the direct summand $V(P') \subset V_{k} \otimes V_{\ell}$ determined by $P$. Injectivity follows from the hard Lefschetz theorem.
\end{enumerate}  
    
First, we create a partition $\Pi_{k-1, \ell+1}$ on $\M_{k-1, \ell+1}$ by giving an equivalence relation: any two pairs $(M_1, M_2)$ and $(M'_1, M'_2)$ in $\M_{k-1, \ell+1}$ are equivalent if they form the same subgraph and yield the same coloring outside of odd chains with $1$-colored edges. More precisely, 
\begin{enumerate}
    \item[(i)] $M_1 \cup M_1' = M_2 \cup M_2'$; 
    \item[(ii)] $M_1 = M_2$, $M_1' = M_2'$ outside of $\C_{M_1, M_1'}$. Note that, $\C_{M_1, M_1'} = \C_{M_2, M_2'}$ by (i). The restriction thus makes sense.
\end{enumerate} 

Any part $P$ of the partition $\Pi_{k-1, \ell +1}$ has the following properties: for any $(M_1, M_1')$ and $(M_2, M_2')$ in $P$,
\begin{enumerate}
    \item the sets of chains $\C_{M_1, M_1'}$ and $\C_{M_2, M_2'}$ are equal as subgraphs; and 
    \item the cardinalities of the blue chains are equal, i.e., 
    \[
    \abs{\B_{M_1, M_1'}} = \abs{\B_{M_2, M_2'}}.
    \] 
\end{enumerate}
Indeed, (1) follows from the definition of $P$, and (2) follows for numerical reasons:  
\[
\abs{\B_{M_1, M_1'}} + \abs{\P_{M_1, M_1'}} = \abs{\C_{M_1, M_1'}} = \abs{\C_{M_2, M_2'}} =  \abs{\B_{M_2, M_2'}} + \abs{\P_{M_2, M_2'}}
\] and 
\[
 \abs{\P_{M_1, M_1'}} - \abs{\B_{M_1, M_1'}} = (\ell +1) - (k-1) = \abs{\P_{M_2, M_2'}} - \abs{\B_{M_2, M_2'}}. 
\] Thus we have that 
\[
\abs{\B_{M_1, M_1'}} = \abs{\B_{M_2, M_2'}}. 
\] Therefore, we set $\C_{P} := \C_{M, M'}$  and $\abs{\B_{P}} := \abs{\B_{M, M'}}$ for any $(M, M')$ in $P$. 

Now, consider the following vector space for any part $P$ in $\Pi_{k-1, \ell+1}$ 
\[
    V_{k-1, \ell+1}(P) := \Span_{\F}\{x_M \otimes x_M \mid (M, M') \in P\}. 
\]
We now realize $V_{k-1, \ell+1}(P)$ as a categorification of the $\abs{\B_{P}}$th level of the Boolean lattice on $\C_P$.
Consider the map 
\[
\beta_P \colon P \to \binom{\C_P}{\abs{\B_{P}}}, \quad (M, M') \mapsto \B_{M, M'}.
\]\\
It is well-defined by the construction of $\B_{M, M'}$ in step (b) of \cref{subsec:construction}. 
It is surjective: for each subset of edges with size $\abs{\B_P}$ of $\C_{P}$, one can color the present subset as blue chains and reverse the last step in (c). It is injective: if two pairs $(M_1, M_1')$ and $(M_2, M_2')$ in $P$ give the same set of blue chains, then the fact 
\[
\B_{M_1, M_1'} \sqcup \P_{M_2, M_2'} = \C_P = \B_{M_2, M_2'} \sqcup \P_{M_2, M_2'}
\] implies that $\P_{M_1, M_1'} = \P_{M_2, M_2'}$. This further means $M_1 = M_2, M_1' = M_2'$ on $\C_{P}$, and moreover $(M_1, M_1') = (M_2, M_2')$. Therefore, $\beta_P$ is a bijection. 

Next, we consider the vector space 
\[
V_{\C_P, \abs{\B_P}} := \Span_{\F} \bigg\{y_{\B} \mid \B \in \binom{\C_P}{\abs{\B_P}}\bigg\}, 
\] and define
\[
\underline{\beta_P} \colon V_{k-1, \ell+1}(P) \to V_{\C_P, \abs{\B_{P}}}, \quad x_{M} \otimes x_{M'} \mapsto y_{\B_{M, M'}}.
\]
It is an isomorphism of vector spaces, because $\beta_P$ is a bijection on the bases.

Then, we do the same procedure for $\M_{k, \ell}$. If we create a partition $\Pi_{k, \ell}$ on $\M_{k, \ell}$ in the same way, then the set 
\[
P' = \bigcup_{(M, M') \in P} \N_{M, M'} 
\] for $P$ in $\Pi_{k-1, \ell+1}$ is also a part in $\Pi_{k, \ell}$. 
Indeed, we need to show that $P'$ satisfies the following properties.  
\begin{enumerate}
    \item Elements in $P'$ are pairwise equivalent. Indeed, since the procedure (d) does not change edges outside of $\C_{M, M'}$, any $(N_1, N_1')$ and $(N_2, N_2')$ in $P'$ must satisfy that $N_1 \cup N_1' = N_2 \cup N_2'$ and $N_1 = N_2, N_1' = N_2'$ outside of $\C_{M, M'}$.
    \item Any pair that is equivalent to a member in $P'$ must also belong to $P$. That is, if a pair $(M, M')$ in $\M_{k, \ell}$ is equivalent to a pair $(N, N')$ in $P'$, then there exists $(\underline{M}, \underline{M}')$ in $P$ such that 
    \[(M, M') \in \N_{\underline{M}, \underline{M}'}.\] 
    Indeed, such $(\underline{M}, \underline{M}')$ can be constructed by reversing the procedure (c) using any pink chain in $\P_{M, M'}$. 
\end{enumerate} We define the map $\beta_{P'}$ and $\underline{\beta_{P'}}$ similar to those for $P$. Note that, by construction, \[\abs{\B_{P'}} = \abs{\B_P} + 1 \quad \textrm{and} \quad \C_{P'} = \C_P.\]

Finally, for each $P$ in $\Pi_{k-1, \ell+1}$, define the linear map 
\[
L_{P} \colon V_{\C_P, \abs{\B_{P}}} \to V_{\C_{P'}, \abs{\B_P}+1}, \quad y_{\B} \mapsto \frac{1}{\abs{\C_P} - \abs{\B_P}} \sum_{\B \subseteq \B' \in \binom{\C_P}{\abs{\B_P} + 1}} y_{\B'}.
\]
Crucially, $L_P$ is the hard Leftschetz operator on the graded vector space spanned by all subsets of $\C_P$, where the grading is given by cardinality. It is injective for degrees $\abs{\B_P} \le \abs{\C_P}/2 - 1$. This operator and its injectivity on the lower half graded pieces have been studied in various contexts. We invite the reader to see proofs of various flavors: \cite{stanley1980weyl}, \cite[The hard Lefschetz theorem]{stanley1983combinatorial}, \cite[Proposition 7]{hara2008determinants}, \cite{harima2013lefschetz}, \cite[Theorem 4.7]{stanley2013algebraic} and \cite[{Theorem 1.1(3)}]{braden2020singular}. 

By construction, the following diagram commutes: 
\begin{center}
\begin{tikzcd}
    V_{k-1, \ell+1}(P) \ar{r}{\beta_P} \ar[swap]{r}{\cong}\ar{d}{\Phi_{k, \ell}} & V_{\C_P, \abs{\B_P}} \ar{d}{L_P} \\
    V_{k, \ell}(P') \ar{r}{\beta_{P'}} \ar[swap]{r}{\cong}& V_{\C_{P'}, \abs{\B_P} + 1}.
\end{tikzcd}
\end{center}
Therefore, $\Phi_{k, \ell}$ is injective from $V_{k-1, \ell+1}(P)$ to $V_{k, \ell}(P')$.

Note that by construction, 
\[
    V_{k-1} \otimes V_{\ell+1} = \bigoplus_{P \in \Pi_{k-1, \ell+1}} V_{k-1, \ell+1}(P) \cong \bigoplus_{P \in \Pi_{k-1, \ell+1}} V_{\C_P, \abs{\B_P}}.
\]  
Then the last sentence of the previous paragraph implies that $\Phi_{k, \ell}$ is injective on $V_{k-1} \otimes V_{\ell+1}$.

\begin{remark}
Our construction was inspired by Krattenthaler's combinatorial proof of the non-equivariant log-concavity of graph matchings in \cite{Krattenthaler}. His proof constructs an injective set map
\[f_{k, \ell} \colon \M_{k-1, \ell+1} \to \M_{k, \ell}.\] This injective map depends on a vertex order on the graph to select only $1$ pink chain to convert into a blue chain; for one concrete example, see \cite[Section 2.2]{stanton1986constructive}. 
In general, this set map is not $\Aut(\G)$-equivariant. See the following example. Our approach fixes where the map $f_{k, \ell}$ breaks the graph symmetry. Interestingly, this fix also reduces the proof of the injectivity to the hard Lefschetz theorem, as mentioned before. 
\end{remark}
\begin{example}
\label{ex:six-cycle}
    Let $(M, M')$ be a pair in $\M_{1, 3}$ in the $6$-cycle vertex-ordered graph $\G$ as shown. When converting a pink chain to a blue chain, we choose the pink chain containing the minimum vertex. (We invite the reader to check that this is indeed the injective map given in \cite[Algorithm 14, Section 2.2]{stanton1986constructive}.)  Together with the automorphism $\rho$ given by the clockwise $2\pi/6$ rotation, this pair is a witness of non-equivariance of $f = f_{k, \ell}$, i.e., \[\rho(f(M)), \rho(f(M')) \ne f(\rho(M)), f(\rho(M')).\]
    \begin{figure}[h!]
        \centering
        \definecolor{pink}{rgb}{1, 0.4, 0.7}
\definecolor{blue}{rgb}{0.2, 0.8, 1}

\begin{tikzpicture}[style=ultra thick, scale=0.8]

\begin{scope}[style=ultra thick,shift={(-5,0)}]
\newdimen\R
\R=0.8cm
\draw  (0:\R)  -- (60:\R);
\draw (60:\R)  -- (120:\R);
\draw (120:\R)  -- (180:\R);
\draw (180:\R)  -- (240:\R);
\draw [blue] (240:\R)  -- (300:\R);
\draw (300:\R)  -- (360:\R);
\draw (360:\R)  -- (0:\R);
\node[scale=0.5] at (120:\R-0.2cm) {$1$};
\node[scale=0.5] at (60:\R-0.2cm) {$2$};
\node[scale=0.5] at (0:\R-0.2cm) {$3$};
\node[scale=0.5] at (300:\R-0.2cm) {$4$};
\node[scale=0.5] at (240:\R-0.2cm) {$5$};
\node[scale=0.5] at (180:\R-0.2cm) {$6$};

\foreach \x in {60,120,180,240,300,360}
\node[inner sep=1.2pt,circle,black,draw,fill] at (\x:\R) {};

\node at (0.0, -1.2) {$M$};
\node at (2.0, -1.2) {$M'$};
\end{scope}
\begin{scope}[style=ultra thick,shift={(-3,0)}]
\newdimen\R
\R=0.8cm
\R=0.8cm
\draw (0:\R)  -- (60:\R);
\draw [pink, dotted] (60:\R)  -- (120:\R);
\draw (120:\R)  -- (180:\R);
\draw [pink, dotted] (180:\R)  -- (240:\R);
\draw (240:\R)  -- (300:\R);
\draw [pink, dotted] (300:\R)  -- (360:\R);
\draw (360:\R)  -- (0:\R);
\node[scale=0.5] at (120:\R-0.2cm) {$1$};
\node[scale=0.5] at (60:\R-0.2cm) {$2$};
\node[scale=0.5] at (0:\R-0.2cm) {$3$};
\node[scale=0.5] at (300:\R-0.2cm) {$4$};
\node[scale=0.5] at (240:\R-0.2cm) {$5$};
\node[scale=0.5] at (180:\R-0.2cm) {$6$};

\foreach \x in {60,120,180,240,300,360}
\node[inner sep=1.2pt,circle,black,draw,fill] at (\x:\R) {};
\end{scope}

\begin{scope}
\draw [thick,->] (-1.8,0.5) --(-0.8,1.2);
\node at (-1.5,1) {$f$};
\end{scope}

\begin{scope}
\draw [thick,->] (-1.8,-0.5) --(-0.8,-1.2);
\node at (-1.5,-1) {$\rho$};
\end{scope}
\begin{scope}
\draw [thick,->] (3.8,-1.5) --(4.8,-1.5);
\node at (4.3,-1.8) {$f$};
\end{scope}
\begin{scope}
\draw [thick,->] (3.8,1.3) --(4.8,1.3);
\node at (4.3,1.8) {$\rho$};
\end{scope}

\begin{scope}[style=ultra thick, shift={(0.5,-1.5)}]
\newdimen\R
\R=0.8cm
\draw (0:\R)  -- (60:\R);
\draw (60:\R)  -- (120:\R);
\draw (120:\R)  -- (180:\R);
\draw [blue] (180:\R)  -- (240:\R);
\draw (240:\R)  -- (300:\R);
\draw (300:\R)  -- (360:\R);
\draw (360:\R)  -- (0:\R);

\node[scale=0.5] at (120:\R-0.2cm) {$1$};
\node[scale=0.5] at (60:\R-0.2cm) {$2$};
\node[scale=0.5] at (0:\R-0.2cm) {$3$};
\node[scale=0.5] at (300:\R-0.2cm) {$4$};
\node[scale=0.5] at (240:\R-0.2cm) {$5$};
\node[scale=0.5] at (180:\R-0.2cm) {$6$};

\foreach \x in {60,120,180,240,300,360}
\node[inner sep=1.2pt,circle,black,draw,fill] at (\x:\R) {};

\node at (0, -1.2) {$\rho(M)$};
\node at (2, -1.2) {$\rho(M')$};
\end{scope}
\begin{scope}[style=ultra thick,shift={(2.5,-1.5)}]
\newdimen\R
\R=0.8cm
\draw [pink, dotted]  (0:\R)  -- (60:\R);
\draw (60:\R)  -- (120:\R);
\draw [pink, dotted]  (120:\R)  -- (180:\R);
\draw (180:\R)  -- (240:\R);
\draw [pink, dotted] (240:\R)  -- (300:\R);
\draw (300:\R)  -- (360:\R);
\draw (360:\R)  -- (0:\R);

\node[scale=0.5] at (120:\R-0.2cm) {$1$};
\node[scale=0.5] at (60:\R-0.2cm) {$2$};
\node[scale=0.5] at (0:\R-0.2cm) {$3$};
\node[scale=0.5] at (300:\R-0.2cm) {$4$};
\node[scale=0.5] at (240:\R-0.2cm) {$5$};
\node[scale=0.5] at (180:\R-0.2cm) {$6$};

\foreach \x in {60,120,180,240,300,360}
\node[inner sep=1.2pt,circle,black,draw,fill] at (\x:\R) {};
\end{scope}
\begin{scope}[style=ultra thick, shift={(0.5,1.3)}]
\newdimen\R
\R=0.8cm
\draw (0:\R)  -- (60:\R);
\draw [blue] (60:\R)  -- (120:\R);
\draw (120:\R)  -- (180:\R);
\draw (180:\R)  -- (240:\R);
\draw [blue] (240:\R)  -- (300:\R);
\draw (300:\R)  -- (360:\R);
\draw (360:\R)  -- (0:\R);

\node[scale=0.5] at (120:\R-0.2cm) {$1$};
\node[scale=0.5] at (60:\R-0.2cm) {$2$};
\node[scale=0.5] at (0:\R-0.2cm) {$3$};
\node[scale=0.5] at (300:\R-0.2cm) {$4$};
\node[scale=0.5] at (240:\R-0.2cm) {$5$};
\node[scale=0.5] at (180:\R-0.2cm) {$6$};

\foreach \x in {60,120,180,240,300,360}
\node[inner sep=1.2pt,circle,black,draw,fill] at (\x:\R) {};

\node at (0, -1.3) {$f(M)$};
\node at (2, -1.3) {$f(M')$};
\end{scope}
\begin{scope}[style=ultra thick,shift={(2.5,1.3)}]
\newdimen\R
\R=0.8cm
\draw (0:\R)  -- (60:\R);
\draw (60:\R)  -- (120:\R);
\draw (120:\R)  -- (180:\R);
\draw [pink, dotted] (180:\R)  -- (240:\R);
\draw (240:\R)  -- (300:\R);
\draw [pink, dotted] (300:\R)  -- (360:\R);
\draw (360:\R)  -- (0:\R);

\node[scale=0.5] at (120:\R-0.2cm) {$1$};
\node[scale=0.5] at (60:\R-0.2cm) {$2$};
\node[scale=0.5] at (0:\R-0.2cm) {$3$};
\node[scale=0.5] at (300:\R-0.2cm) {$4$};
\node[scale=0.5] at (240:\R-0.2cm) {$5$};
\node[scale=0.5] at (180:\R-0.2cm) {$6$};

\foreach \x in {60,120,180,240,300,360}
\node[inner sep=1.2pt,circle,black,draw,fill] at (\x:\R) {};
\end{scope}

\begin{scope}[style=ultra thick, shift={(6,-1.5)}]
\newdimen\R
\R=0.8cm
\draw (0:\R)  -- (60:\R);
\draw (60:\R)  -- (120:\R);
\draw [blue] (120:\R)  -- (180:\R);
\draw (180:\R)  -- (240:\R);
\draw [blue] (240:\R)  -- (300:\R);
\draw (300:\R)  -- (360:\R);
\draw (360:\R)  -- (0:\R);

\node[scale=0.5] at (120:\R-0.2cm) {$1$};
\node[scale=0.5] at (60:\R-0.2cm) {$2$};
\node[scale=0.5] at (0:\R-0.2cm) {$3$};
\node[scale=0.5] at (300:\R-0.2cm) {$4$};
\node[scale=0.5] at (240:\R-0.2cm) {$5$};
\node[scale=0.5] at (180:\R-0.2cm) {$6$};

\foreach \x in {60,120,180,240,300,360}
\node[inner sep=1.2pt,circle,black,draw,fill] at (\x:\R) {};

\node at (0, -1.3) {$f(\rho(M))$};
\node at (2, -1.3) {$f(\rho(M'))$};
\end{scope}
\begin{scope}[style=ultra thick,shift={(8,-1.5)}]
\newdimen\R
\R=0.8cm
\draw [pink, dotted] (0:\R)  -- (60:\R);
\draw (60:\R)  -- (120:\R);
\draw (120:\R)  -- (180:\R);
\draw [pink, dotted] (180:\R)  -- (240:\R);
\draw (240:\R)  -- (300:\R);
\draw (300:\R)  -- (360:\R);
\draw (360:\R)  -- (0:\R);

\node[scale=0.5] at (120:\R-0.2cm) {$1$};
\node[scale=0.5] at (60:\R-0.2cm) {$2$};
\node[scale=0.5] at (0:\R-0.2cm) {$3$};
\node[scale=0.5] at (300:\R-0.2cm) {$4$};
\node[scale=0.5] at (240:\R-0.2cm) {$5$};
\node[scale=0.5] at (180:\R-0.2cm) {$6$};
\foreach \x in {60,120,180,240,300,360}
\node[inner sep=1.2pt,circle,black,draw,fill] at (\x:\R) {};
\end{scope}

\begin{scope}[style=ultra thick, shift={(6,1.3)}]
\newdimen\R
\R=0.8cm
\draw [blue] (0:\R)  -- (60:\R);
\draw (60:\R)  -- (120:\R);
\draw (120:\R)  -- (180:\R);
\draw [blue] (180:\R)  -- (240:\R);
\draw (240:\R)  -- (300:\R);
\draw (300:\R)  -- (360:\R);
\draw (360:\R)  -- (0:\R);
\node[scale=0.5] at (120:\R-0.2cm) {$1$};
\node[scale=0.5] at (60:\R-0.2cm) {$2$};
\node[scale=0.5] at (0:\R-0.2cm) {$3$};
\node[scale=0.5] at (300:\R-0.2cm) {$4$};
\node[scale=0.5] at (240:\R-0.2cm) {$5$};
\node[scale=0.5] at (180:\R-0.2cm) {$6$};

\foreach \x in {60,120,180,240,300,360}
\node[inner sep=1.2pt,circle,black,draw,fill] at (\x:\R) {};

\node at (0, -1.3) {$\rho(f(M))$};
\node at (2, -1.3) {$\rho(f(M'))$};
\end{scope}
\begin{scope}[style=ultra thick,shift={(8,1.3)}]
\newdimen\R
\R=0.8cm
\draw (0:\R)  -- (60:\R);
\draw (60:\R)  -- (120:\R);
\draw [pink, dotted] (120:\R)  -- (180:\R);
\draw (180:\R)  -- (240:\R);
\draw [pink, dotted] (240:\R)  -- (300:\R);
\draw (300:\R)  -- (360:\R);
\draw (360:\R)  -- (0:\R);
\node[scale=0.5] at (120:\R-0.2cm) {$1$};
\node[scale=0.5] at (60:\R-0.2cm) {$2$};
\node[scale=0.5] at (0:\R-0.2cm) {$3$};
\node[scale=0.5] at (300:\R-0.2cm) {$4$};
\node[scale=0.5] at (240:\R-0.2cm) {$5$};
\node[scale=0.5] at (180:\R-0.2cm) {$6$};
\foreach \x in {60,120,180,240,300,360}
\node[inner sep=1.2pt,circle,black,draw,fill] at (\x:\R) {};
\end{scope}

\end{tikzpicture}
        \caption{}
        \label{fig:kratthenthaler-rotated}
    \end{figure} 
\end{example}

\section{A weighted equivariant promotion}
Thanks to the injections, we also obtain an $\Aut(\G)$-equivariant promotion of the following theorem.  
\begin{thm}[{\cite[Theorem 2]{Krattenthaler}}]
    Fix a graph $\G$, the $k$th weighted matching number of $\G$ is a polynomial 
    \[
    \M_k(\G, \mathbf{x}) := \sum_{M \in \M_k} \prod_{e \in M} x_e \in F[x_e]_{e \in E(\G)}.
    \] 
    Then for positive integers ${k \le \ell}$ less than the maximum matching size, the polynomial 
    \[
    \M_{k}(\G, \mathbf{x}) \M_{\ell}(\G, \mathbf{x}) - \M_{k-1}(\G, \mathbf{x}) \M_{\ell + 1}(\G, \mathbf{x}) 
    \] has nonnegative coefficients. 
\end{thm}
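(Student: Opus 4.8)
The plan is to upgrade the $\Aut(G)$-equivariant injection $\Phi_{\ell,k}$ of \cref{thm:main} to a \emph{weighted} injection that keeps track of edge variables, and then to take a trace. First I would refine the construction of $\N(M,M')$ by recording, for each new pair $(N,N') \in \N(M,M')$, the monomial weight $w(N') := \prod_{e \in N'} x_e$. The crucial combinatorial observation is weight preservation: when we convert a single pink chain into a blue chain, the multiset of colored edges $M \cup M' = N \cup N'$ is unchanged, and in fact the number of pink edges in the swapped chain of odd length decreases by exactly one while the number of blue edges increases by one. More importantly, since the map $\bigcup_{(M,M') \in P}\N(M,M') $ is a bijection of $H_i$-equivalence classes (as established in the proof of \cref{prop:phi-injective}) and within a class the union $M \cup M'$ is the fixed subgraph $G_i$, the total ``$\mathbf{x}$-content'' $\prod_{e \in M'} x_e$ is governed only by which edges of $G_i$ end up pink.

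Next I would set up the weighted linear-algebra statement. Working over the polynomial ring $R := F[x_e]_{e \in E(G)}$, I would define a weighted version $\Phi^{\mathbf{x}}_{\ell,k}$ of $\Phi_{\ell,k}$ as an $R$-linear map between free $R$-modules, where the basis element $x_M \otimes x_{M'}$ carries the weight $\prod_{e \in M'} x_e$, and show that the matrix of $\Phi^{\mathbf{x}}_{\ell,k}$, block-diagonalized over subgraphs $G_i$ and $H_i$-classes exactly as in the proof of \cref{prop:phi-injective}, has entries that are \emph{nonnegative scalars times a common monomial} on each block. The point is that on each block the relevant Stanton--White/Stanley map $\varphi_i$ of \cref{lem:linear-injection} is injective with \emph{positive} matrix entries (entries are $\tfrac{1}{n-i} \ge 0$), so a generic square submatrix has nonzero, and after clearing denominators one gets a polynomial identity expressing $m_{\ell-1}(G,\mathbf{x})m_{k+1}(G,\mathbf{x})$ as the image of a map with nonnegative-coefficient ``cofactor'' structure. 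Concretely, summing $x_N \otimes x_{N'} \mapsto (\text{weight})$ over a section of the injection realizes $m_\ell(G,\mathbf{x})m_k(G,\mathbf{x}) - m_{\ell-1}(G,\mathbf{x})m_{k+1}(G,\mathbf{x})$ as a sum, over the pairs $(N,N')$ \emph{not} in the image of $\Phi_{\ell,k}$ (equivalently, over pink chains not selected by the Stanton--White injection, together with blue-chain contributions), of products $\prod_{e} x_e$ — each a genuine monomial with coefficient $+1$.

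Alternatively, and more cleanly, I would phrase it purely set-theoretically, which is how Krattenthaler's original Theorem~2 is proved: the injection $f$ (or rather any honest set-theoretic section $s$ of our multivalued map, picking one $(N,N') \in \N(M,M')$ for each $(M,M')$, which exists since $|\N(M,M')| \ge 1$ — e.g. Krattenthaler's $f$ itself) is weight-preserving in the sense that $\prod_{e \in M}x_e \cdot \prod_{e \in M'}x_e = \prod_{e \in N}x_e \cdot \prod_{e \in N'}x_e$, because $M\cup M' = N \cup N'$ as multisets of edges. Hence $m_{\ell-1}(G,\mathbf{x})m_{k+1}(G,\mathbf{x}) = \sum_{(M,M')} \prod_{e\in M}x_e\prod_{e\in M'}x_e$ equals $\sum_{(M,M') \in \mathrm{Im}(s)} \prod_{e\in N}x_e\prod_{e\in N'}x_e$, a \emph{subsum} of $m_\ell(G,\mathbf{x})m_k(G,\mathbf{x}) = \sum_{(N,N')}\prod_{e\in N}x_e\prod_{e\in N'}x_e$ (using that $s$ is injective, which is exactly \cref{prop:phi-injective} applied to any realizing section). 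The difference is therefore a sum of monomials indexed by $\mathbf{M}_{\ell,k} \setminus \mathrm{Im}(s)$, manifestly with nonnegative — indeed $\{0,1\}$ — coefficients.

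The main obstacle is verifying that a single-valued \emph{weight-preserving} injective section exists and is compatible with the block decomposition — i.e. that one may choose $s$ respecting the $H_i$-classes so that injectivity is genuinely reduced to the classical square-matrix/chain argument of \cref{lem:linear-injection} rather than relying on the averaged map $\Phi_{\ell,k}$, whose matrix entries are fractions and whose injectivity alone does not immediately give a \emph{subsum} statement. I expect this to be routine given the bijection of $H_i$-classes already proved, but it is the step that requires care: one must confirm that restricting to a single $H_i$-class the chosen section is a genuine injection of finite sets, which follows because within a class the only varying data is which odd chains are blue versus pink, and the Stanton--White injection on $[b+p]$-subsets is an honest injection. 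Once that is in place, the weight-preservation $M \cup M' = N \cup N'$ is immediate from the construction, and the theorem follows.
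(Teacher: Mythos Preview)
The paper does not itself prove this statement: it is quoted as \cite[Theorem~2]{Krattenthaler}, and the paper's own contribution is the subsequent commutative-diagram theorem (the ``equivariant promotion''). So there is no in-paper proof to compare against.

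Your second, set-theoretic approach is correct and is precisely Krattenthaler's original argument: the set injection $f\colon\mathbf{M}_{\ell-1,k+1}\to\mathbf{M}_{\ell,k}$ recounted earlier in the paper satisfies $M\cup M'=N\cup N'$ as multisets (only colors on a single odd chain are swapped), hence is weight-preserving, so $m_{\ell-1}(G,\mathbf{x})\,m_{k+1}(G,\mathbf{x})$ is monomial-by-monomial a subsum of $m_{\ell}(G,\mathbf{x})\,m_{k}(G,\mathbf{x})$. You do not need \cref{prop:phi-injective} or the $H_i$-class decomposition for this; $f$ is already an honest injection of finite sets, and that is all that is required. Your first approach, via a weighted linear map $\Phi^{\mathbf{x}}_{\ell,k}$, is superfluous and---as you yourself observe---linear injectivity alone does not produce a subsum statement. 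The paper's commutative diagram $\Pi_{\ell-1,k+1}=\Pi_{\ell,k}\circ\Phi_{\ell,k}$ is the averaged form of the same multiset identity, but the paper does not (and need not) extract the nonnegativity from it.
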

The polynomial ring in variables $\{x_e\}_{e \in E(\G)}$ is naturally a graded $\Aut(\G)$-representation. The $\Aut(\G)$-equivariant promotion of the theorem above can be described as follows. For positive integers ${k \le \ell}$, define 
\[
\iota_{k,\ell} \colon V_{k} \otimes V_{\ell} \to F[x_e]_{e \in E(\G)}, \quad 
x_M \otimes x_{M'} \mapsto \left(\prod_{e \in M} x_e \right)\left(\prod_{e' \in M'} x_{e'} \right). 
\] By definition, $\iota_{k, \ell}$ is $\Aut(\G)$-equivariant. The following theorem immediately follows from construction. Note that the weighting in the definition of $\Phi_{k, \ell}$ is necessary for the commutativity. 
\begin{thm}
    For any graph $\G$ and positive integers $k \le \ell$, the following diagram of $\Aut(\G)$-equivariant maps commutes: 
    \begin{center}
    \begin{tikzcd}[row sep=large, column sep=large]
        V_{k -1} \otimes V_{\ell+1} \ar[r, "\iota_{k-1, \ell+1}"] \ar[d,hookrightarrow, "\Phi_{k, \ell}", swap] & F[x_e]_{e \in E(\G)} \\
        V_{k} \otimes V_{\ell} \ar[ur, "\iota_{k, \ell}", swap]
    \end{tikzcd}. 
    \end{center}
\end{thm}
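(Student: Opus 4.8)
The plan is to chase the basis element $x_M \otimes x_{M'}$ for $(M, M') \in \mathbf{M}_{\ell-1,k+1}$ around both paths of the triangle and check they agree. Since all three maps are defined on the distinguished bases (indexed by matchings) and extended $F$-linearly, and since $\Aut(G)$-equivariance of each map has already been established (Proposition~\ref{prop:phi-equivariant} for $\Phi_{\ell,k}$, and by construction for $\Pi_{\ell-1,k+1}$ and $\Pi_{\ell,k}$), it suffices to verify commutativity on this basis; equivariance of the composite is then automatic.

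First I would compute the top-then-right path: $\Pi_{\ell-1,k+1}(x_M \otimes x_{M'}) = \big(\prod_{e \in M} x_e\big)\big(\prod_{e' \in M'} x_{e'}\big)$, which is the monomial recording, with multiplicity, every edge of $G(M,M')$ once and every doubly-colored edge (i.e.\ edge of $M \cap M'$) twice. Then I would compute the down-then-diagonal path: $\Pi_{\ell,k}\big(\Phi_{\ell,k}(x_M \otimes x_{M'})\big) = \frac{1}{p_{M,M'}} \sum_{(N,N') \in \N(M,M')} \big(\prod_{e \in N} x_e\big)\big(\prod_{e' \in N'} x_{e'}\big)$. The key observation is that each $(N, N')$ in $\N(M,M')$ is obtained from $(M,M')$ by swapping the blue/pink colors along a single odd pink chain; such a swap neither creates nor destroys edges, and it does not touch any doubly-colored edge. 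Hence $N \cup N' = M \cup M'$ as multisets-of-edges-with-multiplicity: the product $\big(\prod_{e \in N} x_e\big)\big(\prod_{e' \in N'} x_{e'}\big)$ equals the very same monomial $\big(\prod_{e \in M} x_e\big)\big(\prod_{e' \in M'} x_{e'}\big)$ for \emph{every} $(N,N') \in \N(M,M')$. Summing the $p_{M,M'}$ identical terms and dividing by $p_{M,M'}$ recovers exactly $\Pi_{\ell-1,k+1}(x_M \otimes x_{M'})$, so the triangle commutes on basis elements, hence everywhere by linearity.

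The step I expect to require the most care is the claim that the edge-multiset $N \cup N'$ (with an edge counted twice iff it lies in $N \cap N'$) is independent of which pink chain is swapped and equals $M \cup M'$. This is where one must be precise about the combinatorics of the coloring: the edges of $M \cap M'$ carry two colors and sit outside $G(M,M')$, so they are untouched by any chain swap; within $G(M,M')$ a color swap along a chain is a bijection from that chain's blue edges to its pink edges and vice versa, so the total edge set (and the blue/pink bipartition's underlying set-theoretic union) is preserved. Once this is spelled out the rest is a one-line cancellation, and no genuine obstacle remains; the whole proof is short.

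\begin{proof}
    Since all three maps are $F$-linear and $\Aut(G)$-equivariant --- $\Phi_{\ell,k}$ by \cref{prop:phi-equivariant}, and $\Pi_{\ell-1,k+1}$, $\Pi_{\ell,k}$ by construction --- it suffices to check that the triangle commutes on the basis of pure tensors $x_M \otimes x_{M'}$ with $(M, M') \in \mathbf{M}_{\ell - 1, k+1}$.

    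Going along the top edge, $\Pi_{\ell-1, k+1}(x_M \otimes x_{M'}) = \big(\prod_{e \in M} x_e\big)\big(\prod_{e' \in M'} x_{e'}\big)$. Going down and then along the diagonal,
    \[
    \Pi_{\ell, k}\big(\Phi_{\ell, k}(x_M \otimes x_{M'})\big) = \frac{1}{p_{M,M'}} \sum_{(N, N') \in \N(M, M')} \bigg(\prod_{e \in N} x_e\bigg)\bigg(\prod_{e' \in N'} x_{e'}\bigg).
    \]
    Now fix $(N, N') \in \N(M, M')$. By construction, $(N, N')$ is obtained from $(M, M')$ by choosing one odd pink chain of $G(M, M')$ and swapping the blue and pink colors along it. The doubly-colored edges, i.e.\ the edges of $M \cap M'$, lie outside $G(M, M')$ and are unaffected; along the chosen chain the swap is a bijection between its blue and its pink edges. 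Hence, counting each edge with multiplicity equal to the number of colors it carries, the edge multiset $N \uplus N'$ equals $M \uplus M'$. Therefore
    \[
    \bigg(\prod_{e \in N} x_e\bigg)\bigg(\prod_{e' \in N'} x_{e'}\bigg) = \bigg(\prod_{e \in M} x_e\bigg)\bigg(\prod_{e' \in M'} x_{e'}\bigg)
    \]
    for every $(N, N') \in \N(M, M')$. Since $|\N(M, M')| = p_{M,M'}$, the sum above consists of $p_{M,M'}$ copies of this single monomial, and dividing by $p_{M,M'}$ gives exactly $\Pi_{\ell-1, k+1}(x_M \otimes x_{M'})$. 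The triangle commutes on basis elements, hence on all of $V_{\ell-1} \otimes V_{k+1}$.
\end{proof}
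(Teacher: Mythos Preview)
Your proof is correct and follows the same idea as the paper, which dispatches the theorem with the one-line remark ``by construction.'' You have simply spelled out what that means: each $(N,N')\in\N(M,M')$ has the same edge multiset as $(M,M')$, so all $p_{M,M'}$ summands in $\Pi_{\ell,k}\circ\Phi_{\ell,k}$ are the same monomial and the normalizing factor cancels.
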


\bibliographystyle{alpha}
\bibliography{bibliography.bib}
\end{document}